\newtheorem{theorem}{Theorem}
\newtheorem{definition}[theorem]{Definition}
\newtheorem{example}[theorem]{Example}
\newtheorem{remark}[theorem]{Remark}
\makeatletter \@namedef{subjclassname@2020}{\textup{2020}
Mathematics Subject Classification} \makeatother
\begin{document}

\title[]{A note on rearrangement Poincar\'e inequalities and the doubling condition}
\author{Joaquim Mart\'{i}n$^{\ast}$}
\address{Department of Mathematics\\
Universitat Aut\`{o}noma de Barcelona}
\email{Joaquin.Martin@uab.cat\\\emph{ORCID: 0000-0002-7467-787X}}
\author{Walter A.  Ortiz**}
\address{Department of Mathematics\\
Universitat Aut\`{o}noma de Barcelona}
\email{WalterAndres.Ortiz@uab.cat\\\emph{ORCID:
0000-0002-8617-3919}}
\thanks{$^{\ast}$Partially supported by Grants PID2020-113048GB-I00 and PID2020-114167GB-I00 funded  both by MCIN/AEI/10.13039/501100011033 and Grant 2021-SGR-00071 (AGAUR, Generalitat de Catalunya)}
\thanks{**Partially supported by Grant 2021-SGR-00071 (AGAUR, Generalitat de Catalunya)}
\thanks{This paper is in final form and no version of it will be submitted for
publication elsewhere.} \subjclass[2020]{46E35; 46E30; 30L99}
\thanks{Conflict of Interest: The authors declare that they have no conflict
of interest.}

\keywords{Metric measure space, Poincar\'{e}  inequality, Rearrangement
invariant spaces, doubling condition.}
\begin{abstract}
We introduce Poincar\'{e} type inequalities based on rearrangement
invariant spaces in the setting of metric measure spaces and analyze
when they imply the doubling condition on the underline measure.
\end{abstract}
\maketitle

\section{Introduction}
An important topic of study in the rich theory of Sobolev spaces
defined on metric measure spaces, are the so-called spaces that
support Poincar\'{e} inequalities introduced in \cite{HK} (for more
information, see for example \cite{FLW}, \cite{FPW}, \cite{Ha},
\cite {HaKo}, \cite{Ko}, \cite{KMR}, \cite{Sh}, \cite{DJS},
\cite{AT}, \cite{MM1}, \cite{MM2}, \cite{MO1} and \cite{MO2} the
references quoted therein).

In order to define this notion, let us recall that a metric-measure
space $(\Omega ,d,\mu )$ is a metric
space $(\Omega ,d)$ with a Borel measure $\mu $ such that $0<\mu (B)<\infty $%
, for every ball $B$ in $\Omega $, we denote by $\sigma B$ the
dilation of a ball $B$ by the factor $\sigma ,$ i.e. if
$B=B(x,r)=\left\{ y\in \Omega :d(x,y)<r\right\} $, then $\sigma
B:=B(x,\sigma r).$ A Borel $\mu $-measurable function $g\geq 0$ is
an upper gradient of $f,$ if for all rectifiable curves $\gamma $
joining points $x,$ $y$ in $\Omega $, we have
\begin{equation*}
|f(x)-f(y)|\leq \int_{\gamma }gds.
\end{equation*}
This definition of upper gradient is due to Heinonen and Koskela in
\cite{HK} (see also \cite{HKST} for a more detailed exposition).

A metric measure space $(\Omega ,d,\mu )$ is said to support a
$\left( q,p\right) -$Poincar\'{e} inequality, $q,p\in \lbrack
1,\infty ),$ if there is a constant $c>0$ and $\sigma \geq 1$ such
that
\begin{equation}
\left( \frac{1}{\mu (B)}\int_{B}\left| f-f_{B}\right| ^{q}d\mu
\right) ^{1/q}\leq cr\left( \frac{1}{\mu (\sigma B)}\int_{\sigma
B}\left| g\right| ^{p}d\mu \right) ^{1/p}  \label{qpPoin}
\end{equation}
whenever $B$ is a ball of radius $r>0$, $f\in L_{loc}^{1}(\Omega )$ and $g\ $%
is an upper gradient of $f$ (here $f_{B}$ denotes the integral average: $%
f_{B}=\frac{1}{\mu (B)}\int_{B}fd\mu ).$

Usually, at the root of analysis in this field, doubling measures
play a central role since they provide a homogeneous space
structure, which makes it possible to adapt many classical tools
available in the Euclidean space. Recall that $\mu$ is said to be
doubling provided there exists a constant $C=C_{\mu}>0$ such that
\begin{equation*}
\mu (2B)\leq C\mu (B)\text{ for all balls }B\subset \Omega .
\end{equation*}

For example it was proved in \cite[Theorem 5.1]{HaKo} that a $\left(
1,p\right) -$Poincar\'{e} inequality self-improves in the sense
that it implies a $\left( q,p\right) -$Poincar\'{e} inequality for some $%
q\in (p,\infty ).$

Recently, the study of when the family of inequalities
(\ref{qpPoin}) implies that the underlying measure is doubling has
begun to be widely considered (see for example \cite{Ha}, \cite{Sh},
\cite{FPW}, \cite{HKST}, \cite{HaKo} and the references quoted
therein, for more applications).

In \cite{KMR},
it has been shown that, if a Borel measure $\mu $ in the Euclidean space $(%
\mathbb{R}^{n},d),$ satisfies the following $\left( q,p\right) -$
Poincar\'{e} type inequality ($q>p\geq 1)$: There exists a constant
$C>0$ such that for every Euclidean ball $B$ and every Lipschitz
function $\varphi $ compactly supported on $B$ it holds true that
\begin{equation*}
\left( \frac{1}{\mu (B)}\int_{B}\left| \varphi \right| ^{q}d\mu
\right) ^{1/q}\leq Cr\left( \frac{1}{\mu (B)}\int_{B}\left| \nabla
\varphi \right| ^{p}d\mu \right) ^{1/p}+C\left( \frac{1}{\mu
(B)}\int_{B}\left| \varphi \right| ^{p}d\mu \right) ^{1/q},
\end{equation*}
then $\mu $ is doubling. In \cite{AH}, a perusal at the proof of the
previous result, allowed the authors to translate this beautiful
result to the metric setting without assuming the balls had the same
radius on both sides.

It is know (see for example \cite[Example 2.2]{Ko} and \cite[Example
4]{AH}) that there exits metric-measure spaces endowed with a
non-doubling measure
that supports a $(p,p)-$Poincar\'{e} inequality (\ref{qpPoin}) for all $%
1\leq p<\infty ,$ therefore the result obtained in \cite{AH} is
sharp when considering Poincar\'{e} inequalities in which the
functions norms involved are given by $L^{p}-$spaces.

The natural question, in view of the above, is whether we can obtain
weaker versions of Poincar\'{e}'s inequalities that still would
imply the doubling property. In other words, we wonder if it is
possible to replace the norm on
the left-hand side by one smaller than any power bump and still obtain that $%
\mu $ is doubling. A first attempt in this direction was done by L.
Korobenko in \cite[Theorem 2.4]{Ko}, where she considered in the
left hand
side an Orlicz norm\footnote{%
See Section \ref{secpreli} below.} and a $L^{1}$ norm in the right
hand side and was able to prove the following result: Let $(\Omega
,d,\mu )$ be a metric measure space\ and let $\Phi $ be a Young
function (see Section \ref {secpreli} below) such that there exists
an $\alpha >1$ satisfying
\begin{equation}
\Phi (t)\geq t(1+\ln t)^{\alpha },\text{ \ }t>1,  \label{condi}
\end{equation}
for some $\alpha >1.$ Assume that there is a constant $C>0,$ such
that
\begin{eqnarray}
\left\| w\right\| _{L^{\Phi }\left( B;\frac{d\mu }{\mu (B)}\right) }
&:&=\inf \left\{ \lambda >0;\int_{B}A\left( \frac{|w(x)|}{\lambda
}\right)
\frac{d\mu }{\mu (B)}\leq 1\right\}  \label{poinOrli} \\
&\leq &Cr\int_{B}\left| w\right| ^{p}\frac{d\mu }{\mu (B)} .  \notag
\end{eqnarray}
whenever $B$ is a ball, $w$ is a $\mu $-measurable function
supported on $B$ with zero boundary values and $g$ is an
upper-gradient of $w$, then $\mu $ is doubling.

The above two results raise the question of what kind of function
spaces are suitable in Poincar\'{e}'s inequalities definition in
order obtain the doubling condition. The aim of this paper will try
to answer this question, to this end, we will enlarge the class of
function spaces, and corresponding norms, which define
Poincar\'{e}'s inequalities and will analyze when still imply the
doubling condition. Since the norm of functions in Lebesgue or
Orlicz spaces just only depends on its integrability properties, the
natural
class seems to be the class of rearrangement invariant function spaces (%
\emph{r.i. spaces} for short). Roughly speaking, a r.i. space is a
Banach function space where the norm of a function depends only on
the $\mu $-measure of its level sets. Lebesgue and Orlicz spaces are
examples of r.i. spaces (see Section \ref{secpreli} below).

The paper is organized as follows. In Section 2 we introduce the
notation, the standard assumptions, give a brief overview in the
theory of r.i. spaces used in the paper and state our main result.
Section 3 is dedicated to the proof of this result and we make
further comments and remarks. Finally, the last section contains
several examples.

\section{Preliminaries\label{secpreli}}

In this section we establish some further notation and background
information, we provide more details about metrics spaces and r.i.
spaces in will be working with and state our main result.

For measurable functions $f:\Omega \rightarrow \mathbb{R},$ the
distribution function of $f$ is given by
\begin{equation*}
\mu _{f}(t)=\mu \{x\in {\Omega }:\left| f(x)\right| >t\}\ \ \ \
(t>0).
\end{equation*}

The \textbf{decreasing rearrangement} $f_{\mu }^{\ast }$ of $f$ is
the right-continuous non-increasing function from $[0,\infty )$ into
$[0,\infty ) $ which is equimeasurable with $f$. Namely,
\begin{equation*}
f_{\mu }^{\ast }(s)=\inf \{t\geq 0:\mu _{f}(t)\leq s\}.
\end{equation*}
We say that a Banach function space with the Fatou property
$X=X({\Omega })$ on $({\Omega },d,\mu )$ is a \textbf{r.i. space},
if $g\in X$ implies that all equimeasurable function $f$, i.e.
$f_{\mu }^{\ast }=g_{\mu }^{\ast },$ also belong to $X,$ and $\Vert
f\Vert _{X}=\Vert g\Vert _{X}$.

A basic example of r.i. spaces are the standard Lebesgue spaces $%
L^{p}(\Omega )$, for $p\geq 1.$ A generalization of the Lebesgue
spaces is provided by the Orlicz spaces.

Let $A\colon \lbrack 0,\infty )\rightarrow \lbrack 0,\infty )$ a
Young function, namely a convex (non trivial), left-continuous
function vanishing at $0,$ the\textbf{\ Orlicz space} $L^{A}(\Omega
,\mu )$ is the collection of all $\mu -$measurable functions $f$ for
which there exists a $\lambda $ such that
\begin{equation*}
\int_{\Omega }A\left( \frac{|f(x)|}{\lambda }\right) d\mu <\infty .
\end{equation*}
The Orlicz space $L^{A}(\mu )$ is endowed with the Luxemburg norm

\begin{equation*}
\left\| f\right\| _{L^{A}(\Omega ,\mu )}=\inf \left\{ \lambda
>0;\int_{\Omega }A\left( \frac{|f(x)|}{\lambda }\right) d\mu \leq 1\right\} .
\end{equation*}

A r.i. space $X=X({\Omega })$ on $({\Omega },d,\mu )$ can be
represented by
a r.i. space on the interval $(0,\mu (\Omega )),$ with Lebesgue measure, $%
\bar{X}=\bar{X}(0,\mu (\Omega ))$ such that $\Vert f\Vert _{X}=\Vert
f_{\mu }^{\ast }\Vert _{\bar{X}},$ for every $f\in X.$

The space $\bar{X}$ is called the representation spaces of $X,$ a
characterization of the norm $\Vert \cdot \Vert _{\bar{X}}$ is given
in \cite[Theorem 4.10 and subsequent remarks]{BS}.

\begin{remark}
\label{carac}Since (see \cite{BS}) for any nonnegative, increasing
left continuous function $\psi :[0,\mu (\Omega ))$ such that $\psi
(0^{+})=0$ we have that
\begin{equation*}
\int_{\Omega }\psi (\left| f\right| )d\mu =\int_{0}^{\mu (\Omega
)}\psi (f_{\mu }^{\ast }(s))ds,
\end{equation*}
it can be easily seen that the representation space of an Orlicz space $%
L^{\Phi }(\Omega ,\mu )$ is $L^{\Phi }(\left( 0,\mu (\Omega )\right)
),$ the Orlicz space defined on $\left( 0,\mu (\Omega )\right) $
with respect
then Lebesgue measure, in particular, the representation space of $%
L^{p}(\Omega )$ is $L^{p}(\left( 0,\mu (\Omega )\right) ).$
\end{remark}

We now look for a form of Poincar\'{e}'s inequalities defined by
r.i. spaces.

\begin{definition}
Let $({\Omega },d,\mu )$ be a metric measure space, let $B\subset
\Omega $
be a ball , denote $%
%TCIMACRO{\UNICODE[m]{0xb5}}%
%BeginExpansion
{\mu}%
%EndExpansion
_{B}=%
%TCIMACRO{\UNICODE[m]{0xb5}}%
%BeginExpansion
{\mu}%
%EndExpansion
(B)^{-1}%
%TCIMACRO{\UNICODE[m]{0xb5}}%
%BeginExpansion
{\mu}%
%EndExpansion
.$ Given a r.i. space $X$ on $\Omega $ we define
\begin{equation*}
X(B,%
%TCIMACRO{\UNICODE[m]{0xb5}}%
%BeginExpansion
{\mu}%
%EndExpansion
_{B})=\left\{ f\in L_{loc}^{1}(\Omega ):\left\| f\right\| _{X(B,%
%TCIMACRO{\UNICODE[m]{0xb5}}%
%BeginExpansion
{\mu}%
%EndExpansion
_{B})}:=\left\| \left( f\chi _{B}\right) _{\mu }^{\ast }(s\mu (B))\right\| _{%
\bar{X}(0,1)}<\infty \right\} .
\end{equation*}
($\bar{X}(0,1)$ denotes the representation spaces of $X$ where its
function
norm is restricted to $(0,1)$)\footnote{%
The norm $\left\| \cdot \right\| _{X(B,%
%TCIMACRO{\UNICODE[m]{0xb5}}%
%BeginExpansion
{\mu}%
%EndExpansion
_{B})}$ is closely related with the average norm introduced in
\cite{CaCi}.}.
\end{definition}

\begin{definition}
\label{XYPoin}Let $\left( \Omega ,d,\mu \right) $ be as above. Given two $%
X,Y $ two r.i. spaces on $\Omega ,$ we say that the triple $\left(
\Omega ,d,\mu \right) $ admits a $\left( X,Y\right) -$Poincar\'{e}
inequality if there exist constants $C_{S},$ $\sigma \geq 1$ such
that
\begin{equation}
\left\| f-f_{B}\right\| _{X(B,%
%TCIMACRO{\UNICODE[m]{0xb5}}%
%BeginExpansion
{\mu}%
%EndExpansion
_{B})}\leq C_{S}r\left\| g\right\| _{Y\left( \sigma B,\mu _{\sigma
B}\right) },  \label{poin}
\end{equation}
whenever $B$ is a ball of radius $r\in (0,\infty ),$ $f\in
L_{loc}^{1}(\Omega )$ and $g:\Omega \rightarrow \lbrack 0,\infty ]$
is an upper gradient of $f.$
\end{definition}

\begin{remark}
Let $\Phi $ be a Young function an $h\in L_{loc}^{1}(\Omega ).$ By
remark \ref{carac} we get
\begin{eqnarray*}
\int_{B}\Phi \left( \left| h\right| \right) \frac{d\mu }{\mu (B)}
&=&\int_{\Omega }\Phi \left( h\chi _{B}\right) \frac{d\mu }{\mu (B)} \\
&=&\int_{0}^{\mu (B)}\Phi \left( \left( h\chi _{B}\right) _{\mu
}^{\ast
}(s)\right) \frac{ds}{\mu (B)} \\
&=&\int_{0}^{1}\Phi \left( \left( h\chi _{B}\right) _{\mu }^{\ast
}(s\mu (B))\right) ds.
\end{eqnarray*}
Thus
\begin{equation*}
\left\| h\right\| _{L^{\Phi }(B,%
%TCIMACRO{\UNICODE[m]{0xb5}}%
%BeginExpansion
{\mu}%
%EndExpansion
_{B})}=\left\| \left( h\chi _{B}\right) _{\mu }^{\ast }(s\mu
(B))\right\| _{L^{\Phi }(0,1)}.
\end{equation*}
In Particular, our $\left( L^{q},L^{p}\right) -$Poincar\'e
inequality coincides with the classical $(q,p)-$Poin\-car\'{e}
inequality given by (\ref {qpPoin}). Similarly for an Orlicz space
$L^{\Phi },$ the $\left( L^{\Phi },L^{1}\right) -$Poincar\'{e}
inequality is the same that (\ref{poinOrli}).
\end{remark}

Obviously, we cannot expect that a $\left( X,X\right)
-$Poin\-car\'{e} inequality implies the doubling property, we shall
need to replace the space in the left-hand side in order to obtain
``a gain'', to this end we shall consider the fundamental
function\textbf{\ }of a r.i. space $X$ on $\Omega . $

Given a r.i. space $X$ on $\Omega ,$ its \textbf{fundamental
function} is defined by $\varphi _{X}(s)=\left\| \chi _{E}\right\|
_{X},$ where $E\subset
\Omega $ is an arbitrary measurable subset with $%
%TCIMACRO{\UNICODE[m]{0xb5}}%
%BeginExpansion
{\mu}%
%EndExpansion
(E)=t.$ By renorming, if necessary (see \cite{BS}), we can always
assume that $\varphi _{X}$ is concave and $\varphi _{X}(1)=1$. We
also assume in what follows that $\varphi _{X}(0)=0.$

Our main result is the following theorem.

\begin{theorem}
Let $X,Y$ be two r.i. spaces on $\Omega .$ Let $\Psi
(t)=\dfrac{\varphi _{X}(t)}{\varphi _{Y}(t)}.$ If there exists a
continuous increasing function $g:[1,\infty )\rightarrow \lbrack
1,\infty )$ with $g(1)=1,$ such that
\begin{equation*}
\Psi \left( \frac{1}{t}\right) \geq g(t),\text{ }\ (t\geq 1),
\end{equation*}
satisfying Ermakoff's condition\footnote{%
V. P. Ermakoff \cite{Er} gave in 1872 a test for convergence of
positives series based on the exponential function and the integral
test. Namely, given a continuous, positive increasing function
$g:[1,\infty )\rightarrow \lbrack 1,\infty ),$ such that
$\lim_{t\rightarrow \infty }\dfrac{tg(t)}{g(e^{t})}=a,$ then the
series $\sum_{n=1}^{\infty }\frac{1}{ng(n)}$ converges when $a<1$
and diverges when $a>1$ (a proof can be found in \cite[p. 296 and p.
298]{Kn}).
\par
We do not know any example where the limit exists but is different from $%
0,1$ or $\infty $.}
\begin{equation}
\lim_{t\rightarrow \infty }\dfrac{tg(t)}{g(e^{t})}=0.  \label{decre}
\end{equation}
Then, if the triple $\left( \Omega ,d,\mu \right) $ admits an
$\left( X,Y\right) -$ Poincar\'{e} inequality, then measure $\mu $
is doubling.
\end{theorem}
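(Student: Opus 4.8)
The plan is to argue by contradiction along the familiar chain-of-balls scheme used in \cite{Ko} and \cite{AH}, but now extract the quantitative information from the fundamental functions rather than from explicit power weights. Suppose $\mu$ is \emph{not} doubling. Then for every $n$ there is a ball $B_n=B(x_n,r_n)$ with $\mu(2B_n)\geq 2^{\,n}\mu(B_n)$; by iterating we can in fact produce, inside a fixed ball, a nested (or suitably separated) sequence of balls $B^{(k)}$ whose measures decay extremely fast, so that the "density ratios" $\mu(B^{(k+1)})/\mu(B^{(k)})$ are as small as we like. The standard device is to test the $(X,Y)$-Poincar\'e inequality \eqref{poin} on a Lipschitz cutoff $f=f_k$ which equals $1$ on a small ball $E_k$ of tiny measure and $0$ outside a comparable ball, with upper gradient $g$ supported on an annulus and of size $\sim 1/r$. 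On the right-hand side $\|g\|_{Y(\sigma B,\mu_{\sigma B})}$ is then controlled (up to the factor $C_S r$) by $\varphi_Y$ evaluated at the relative measure of that annulus; on the left-hand side, since $f-f_B$ is bounded below by a fixed fraction on $E_k$ once $\mu(E_k)/\mu(B)$ is small, $\|f-f_B\|_{X(B,\mu_B)}\gtrsim \varphi_X\!\big(\mu(E_k)/\mu(B)\big)$. Combining, one obtains for the relative measures $t_k:=\mu(B)/\mu(E_k)\to\infty$ an inequality of the shape
\begin{equation*}
\varphi_X\!\Big(\tfrac1{t_k}\Big)\ \lesssim\ \varphi_Y\!\Big(\tfrac1{t_k}\Big),
\qquad\text{i.e.}\qquad \Psi\!\Big(\tfrac1{t_k}\Big)=\frac{\varphi_X(1/t_k)}{\varphi_Y(1/t_k)}\ \leq\ \text{const},
\end{equation*}
which already looks like it should clash with $\Psi(1/t)\geq g(t)\to\infty$ — but the point of the theorem is that a single test function is not enough: one must sum the gains over a whole chain of $\sim\log$-many scales, and it is precisely here that Ermakoff's condition \eqref{decre} enters.

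The heart of the argument is therefore the \emph{telescoping} step. Fixing a large ball $B_0$ and a point $x_0$ of "low density", one builds concentric balls $B_j=B(x_0,2^{-j}r_0)$ for $j=0,1,\dots,N$ and tests \eqref{poin} with the Lipschitz function that interpolates from $1$ on $B_{j+1}$ to $0$ outside $B_j$; its upper gradient is supported on $B_j\setminus B_{j+1}$ with size $\sim 2^{j}/r_0$. Writing $a_j=\mu(B_j)/\mu(B_0)$, each application produces
\begin{equation*}
\varphi_X\!\Big(\frac{a_{j+1}}{a_0}\Big)\ \leq\ C_S\,\varphi_Y\!\Big(\frac{a_j-a_{j+1}}{a_0}\Big)\ \leq\ C_S\,\varphi_Y\!\Big(\frac{a_j}{a_0}\Big),
\end{equation*}
and dividing by $\varphi_Y$ and using $\Psi(1/t)\geq g(t)$ together with the near-multiplicativity (concavity) of the fundamental functions, one converts this into a lower bound for the successive ratios $a_{j}/a_{j+1}$ of the form $a_j/a_{j+1}\gtrsim g(a_0/a_j)$ or, after relabelling $u_j:=a_0/a_j\geq 1$, a recursion $u_{j+1}\gtrsim u_j\,g(u_j)$. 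Hence $\log u_{j+1}\gtrsim \log u_j+\log g(u_j)$, and the growth of $u_j$ is controlled from below by iterating $u\mapsto u\,g(u)$. Ermakoff's hypothesis $tg(t)/g(e^{t})\to 0$ says exactly that $g$ grows \emph{faster than any such iteration can outrun}: it forces $\sum 1/(u_j)$ — equivalently the total relative measure $\sum a_j$, which must stay $\leq$ (something finite, since the $B_j$ are nested inside $B_0$) — to be comparable to a convergent Ermakoff-type series, whereas non-doubling lets us drive $a_N\to 0$ at an essentially geometric rate, i.e. $u_N$ grows only exponentially. Quantitatively: non-doubling gives $u_{j}\leq e^{Cj}$ for the chain we selected, while the recursion $u_{j+1}\geq u_j g(u_j)$ together with \eqref{decre} gives $u_j\geq e^{\omega(j)}$ with $\omega(j)/j\to\infty$; comparing at $j=N$ and letting $N\to\infty$ is the contradiction.

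Assembling the pieces: from the negation of doubling, select a point and a scale where the density drops geometrically over $N$ dyadic steps (this is the routine part, done as in \cite[Lemma~2.x]{AH} or \cite{Ko}); run the chain of test functions through \eqref{poin} to get the recursion $u_{j+1}\geq c\,u_j\,g(u_j)$ with $u_0\geq 1$; invoke the Ermakoff condition \eqref{decre}, in the form that $\lim t g(t)/g(e^t)=0$ implies the iterates of $u\mapsto cug(u)$ starting from any $u_0>1$ grow super-exponentially, to derive a bound $u_N\geq e^{\varepsilon_N N}$ with $\varepsilon_N\to\infty$; and contrast this with the geometric upper bound on $u_N$ coming from the chosen non-doubling chain. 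I expect the main obstacle to be the \emph{bookkeeping that links Ermakoff's integral/series test to the discrete recursion $u_{j+1}\geq cu_jg(u_j)$} — i.e.\ showing cleanly that \eqref{decre} is the precise threshold making these iterates outgrow every geometric sequence — together with the care needed so that the cutoff functions are genuinely admissible (correct upper gradients on the annuli, the dilation factor $\sigma$ absorbed, and the constant $\varphi_Y(1)=1,\ \varphi_X(1)=1$ normalization used consistently). Everything else — equimeasurability identities from Remark~\ref{carac}, concavity of $\varphi_X,\varphi_Y$, the lower bound $\|f-f_B\|_{X(B,\mu_B)}\gtrsim\varphi_X(\mu(E)/\mu(B))$ once $f$ is a normalized bump — is routine manipulation with the definitions in Section~\ref{secpreli}.
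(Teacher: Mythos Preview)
Your chain-of-cutoffs strategy is the right template, but the specific implementation you describe has a genuine gap that the paper's proof fixes with an idea you are missing.

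You take \emph{dyadic} balls $B_j=B(x_0,2^{-j}r_0)$ and claim the Poincar\'e inequality yields $\varphi_X(a_{j+1})\le C_S\,\varphi_Y(a_j)$. But applying \eqref{poin} on $B_0$ with the cutoff from $B_{j+1}$ to $B_j$ produces an upper gradient of size $\sim 2^{j}/r_0$, so the right-hand side carries an unavoidable factor $2^{j}$; applying it on $B_j$ instead removes the factor but then the ratios on the two sides are $\mu(B_{j+1})/\mu(B_j)$, not $a_{j+1}$ and $a_j$. Either way your displayed recursion is not what the inequality gives. More seriously, your contradiction scheme is inverted: you claim ``non-doubling gives $u_j\le e^{Cj}$'' for the chosen chain, but non-doubling gives no \emph{upper} bound on $u_j=\mu(B_0)/\mu(B_j)$ --- it only says some ratio is \emph{large}. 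With dyadic shrinking balls there is nothing to contradict the super-exponential lower bound, since $\mu(B_j)$ may go to $0$ as fast as it likes.

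The paper's key device repairs both issues at once. Instead of dyadic radii, one sets $r_1=r$ and
\[
r_j-r_{j+1}=\frac{r}{2c_1\,h(j)},\qquad h(j)=j\,g(j),\qquad c_1=\sum_{j\ge 1}\frac{1}{h(j)}<\infty,
\]
the convergence of the series being \emph{exactly} the content of Ermakoff's condition \eqref{decre}. Then $r_j\downarrow r/2$, so every $B_j$ contains $\tfrac12 B$; the upper gradient of the $j$-th cutoff is $\sim h(j)/r$ (not $2^{j}/r$), and the Poincar\'e inequality on $2B$ yields
\[
\varphi_X\!\left(\frac{\mu(B_{j+1})}{\mu(2B)}\right)\ \le\ C\,h(j)\,\varphi_Y\!\left(\frac{\mu(B_{j})}{\mu(2B)}\right).
\]
Setting $P_j=\big(C\,h(j)\,\varphi_Y(\mu(B_j)/\mu(2B))\big)^{-1}$, one proves by induction (using \eqref{decre} once more in the form $g(De^{t})/(eCtg(t))>1$ and the elementary claim $\varphi_X^{-1}(s)\le\varphi_Y^{-1}\big(s/g(1/s)\big)$) that if $P_1$ is large then $P_j\ge P_1 e^{j-1}\to\infty$. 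But since $B_j\supset\tfrac12 B$, $\varphi_Y(\mu(B_j)/\mu(2B))$ is bounded below, hence $P_j\le \big(C\,h(j)\,\varphi_Y(\mu(\tfrac12 B)/\mu(2B))\big)^{-1}\to 0$. This is the contradiction; the uniform bound on $P_1$ is exactly the doubling estimate. The choice of radii is not cosmetic: it is what simultaneously produces the correct $h(j)$ weight in the recursion and the fixed lower bound $\mu(B_j)\ge\mu(\tfrac12 B)$ that closes the argument. Your dyadic chain has neither.
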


Condition (\ref{decre}) controls ``the gain'' needed on the
left-hand side
norm that allows us to deduce the doubling condition. For example, if a $%
\left( L^{\Phi },L^{1}\right) -$Poincar\'{e} inequality holds and
the Young
function satisfies $\Phi (t)\geq t\left( 1+\ln t\right) ^{\alpha }$ for $t>1$%
, then $\Psi (1/t)\geq \left( 1+\ln \frac{1}{t}\right) ^{\alpha }.$
Whence, in case that $\alpha >1$ (\ref{decre}) holds and the Theorem
applies (in particular we recover the main result of \cite{Ko}).
When $\alpha =1$ it is not possible to conclude the doubling
property (see \cite[Remark 3.1]{Ko}) (notice that in that case
condition (\ref{decre}) fails).

\section{The Proof of Theorem \label{mainth}}

Let us write $h(t)=tg(t).$ Ermakoff's condition (\ref{decre})
implies that the series $\sum_{j=1}^{\infty }\frac{1}{h(j)}$ is
convergent, let us denote by $c_{1}\ $its sum. Given
$B:=B(y,r)\subset \Omega $ set $2B:=B(y,2r)$ and define the family
of Lipschitz functions $\left\{ f_{j}\right\} _{j\in \mathbb{N}}$ in
the following way: for $j=1,$ set $r_{1}=r,$ and
\begin{equation*}
r_{j}-r_{j+1}=\dfrac{r}{2c_{1}h(j)}.
\end{equation*}

Let
\begin{equation*}
f_{j}(x):=
\begin{cases}
1, & if\;x\in B_{j+1} \\
\dfrac{r_{j}-d(x,y)}{r_{j}-r_{j+1}}, & if\;x\in B_{j}\setminus B_{j+1} \\
0, & if\;x\in \Omega \setminus B_{j}.\
\end{cases}
\end{equation*}
Also for $j\in \mathbb{N}$, define the balls $B_{j}$ as
\begin{equation*}
\frac{1}{2}B\subset B_{j}:=\left\{ x\in \Omega :d(x,y)\leq
r_{j}\right\} \subset B\subset 2B.
\end{equation*}
(the first inclusion follows from the fact that $\lim_{j\rightarrow
\infty }r_{j}=r/2).$ Then, for each $j,$
\begin{equation*}
g_{j}=\dfrac{1}{r_{j}-r_{j+1}}\chi
_{B_{j}(x)}=\dfrac{2c_{1}h(j)}{r}\chi _{B_{j}(x)}
\end{equation*}
is an upper gradient of $f_{j}.$ By the $\left( X,Y\right)
-$Poincar\'{e} inequality (\ref{poin}) applied to each $f_{j}$ on
$2B$ we get
\begin{equation}
\left\| f_{j}-\left( f_{j}\right) _{2B}\right\| _{X\left( 2B,\mu
_{2B}\right) }\leq c2r\left\| g\right\| _{Y\left( 2\sigma B,\mu
_{2\sigma B}\right) }.  \label{aaa0}
\end{equation}
By the triangle inequality,
\begin{eqnarray}
\left\| f_{j}\right\| _{X\left( 2B,\mu _{2B}\right) } &\leq &\left\|
f_{j}-\left( f_{j}\right) _{2B}\right\| _{X\left( 2B,\mu
_{2B}\right) }+\left\| \left( f_{j}\right) _{2B}\right\| _{X\left(
2B,\mu _{2B}\right) }
\label{aaa1} \\
&=&\left\| f_{j}-\left( f_{j}\right) _{2B}\right\| _{X\left( 2B,\mu
_{2B}\right) }+\left| \left( f_{j}\right) _{2B}\right|  \notag \\
&\leq &c2r\left\| g\right\| _{Y\left( 2\sigma B,\mu _{2\sigma
B}\right) }+\left| \left( f_{j}\right) _{2B}\right| \text{ \ \ (by\
(\ref{aaa0}))} \notag
\end{eqnarray}
Observe that for each fixed $j\in \mathbb{N},$ we have that
\begin{eqnarray}
\left\| g_{j}\right\| _{Y\left( 2\sigma B,\mu _{2\sigma B}\right) } &=&%
\dfrac{2c_{1}h(j)}{r}\left\| \chi _{B_{j}}\right\| _{Y\left( 2\sigma
B,\mu
_{2\sigma B}\right) }  \label{aa1} \\
&=&\dfrac{2c_{1}h(j)}{r}\varphi _{Y}\left( \frac{\mu (B_{j})}{\mu (2\sigma B)%
}\right)  \notag \\
&\leq &\dfrac{2c_{1}h(j)}{r}\varphi _{Y}\left( \frac{\mu (B_{j})}{\mu (2B)}%
\right) \text{ \ (since }\sigma \geq 1).  \notag
\end{eqnarray}
Using that $f_{j}\leq 1$ and it is supported on $B_{j}$ $\subset
2B,$ we obtain
\begin{eqnarray}
\left| \left( f_{j}\right) _{2B}\right| &\leq &\frac{1}{\mu (2B)}%
\int_{2B}\left| f_{j}\right| d\mu =\int_{0}^{1}\left( f_{j}\right)
^{\ast
}(s\mu (2B))ds  \label{aa2} \\
&\leq &\left\| \left( f_{j}\right) ^{\ast }(s\mu (2B))\right\| _{\bar{Y}%
(0,1)}\text{ (by H\"{o}lder's inequality (see \cite{BS}))}  \notag \\
&\leq &\left\| \chi _{\lbrack 0,\mu (B_{j})]}(s\mu (2B))\right\| _{\bar{Y}%
(0,1)}\text{ }  \notag \\
&=&\varphi _{Y}\left( \frac{\mu (B_{j})}{\mu (2B)}\right) .  \notag
\end{eqnarray}
Moreover, since $f_{j}=1$ in $B_{j+1}$, we get
\begin{eqnarray}
\left\| f_{j}\right\| _{X\left( 2B,\mu _{2B}\right) } &\geq &\left\|
\chi _{B_{j+1}}\right\| _{X\left( 2B,\mu _{2B}\right) }=\left\| \chi
_{\lbrack
0,\mu (B_{j+1}))}(s\mu (2B))\right\| _{\bar{X}}  \label{aa3} \\
&=&\varphi _{X}\left( \frac{\mu (B_{j+1})}{\mu (2B)}\right) .
\notag
\end{eqnarray}
Inserting the information (\ref{aa1}), (\ref{aa2}) and (\ref{aa3}) back in (%
\ref{aaa1}) we obtain
\begin{equation}
\varphi _{X}\left( \frac{\mu (B_{j+1})}{\mu (2B)}\right) \leq 4cr\dfrac{%
2c_{1}h(j)}{r}\varphi _{Y}\left( \frac{\mu (B_{j})}{\mu (2B)}\right)
. \label{bbb}
\end{equation}
At this point, for $j\in \mathbb{N},$ define
\begin{equation*}
P_{j}(B)=\dfrac{1}{Ch(j)\varphi _{Y}\left( \frac{\mu (B_{j})}{\mu (2B)}%
\right) },
\end{equation*}
where $C=8cc_{1}.$

\bigskip Using this notation we can write (\ref{bbb}) as
\begin{equation}
\varphi _{X}\left( \frac{\mu (B_{j+1})}{\mu (2B)}\right) \leq \dfrac{1}{%
P_{j}(B)}.  \label{porfin}
\end{equation}

On the other hand, (\ref{decre}) ensures that we can pick $D\geq 1$
such that
\begin{equation}
\frac{1}{eC}\dfrac{g(De^{t})}{tg(t)}>1,\;\;t\geq 1.  \label{epa1}
\end{equation}
We will show that there is a constant $\tilde{C}$ such that
\begin{equation}
P_{1}(B)=\dfrac{1}{C\varphi _{Y}\left( \frac{\mu (B)}{\mu
(2B)}\right) }\leq \tilde{C}  \label{provadoble}
\end{equation}
for all ball $B\subset \Omega ,$ which implies that $%
%TCIMACRO{\UNICODE[m]{0xb5}}%
%BeginExpansion
{\mu}%
%EndExpansion
$ is doubling.

Suppose that (\ref{provadoble}) is not satisfied, then there exists a ball $%
B\subset \Omega $ such that
\begin{equation}
P_{1}(B)=\dfrac{1}{C\varphi _{Y}\left( \frac{\mu (B)}{\mu (2B)}\right) }%
>e^{2}D.  \label{nodobla}
\end{equation}
We will show by induction that
\begin{equation}
P_{j}(B)\geq P_{1}(B)e^{j-1}.  \label{indu}
\end{equation}
The case $j=1$ is triviality true.\ Assume that $P_{j}(B)\geq
P_{1}(B)e^{j-1},$ for some $j\geq 1.$ We claim that
\begin{equation}
\varphi _{X}^{-1}(t)\leq \varphi _{Y}^{-1}\left( \frac{t}{g(\frac{1}{t})}%
\right) ,\;\;\;0<t<1.  \label{claim}
\end{equation}
Assuming momentarily the validity of (\ref{claim}), and taking into
account
that $P_{j}(B)\geq P_{1}(B)e^{j-1}>1$, it follows from (\ref{porfin}) and (%
\ref{claim}) that
\begin{equation*}
\frac{\mu (B_{j+1})}{\mu (2B)}\leq \varphi _{X}^{-1}\left( \dfrac{1}{P_{j}(B)%
}\right) \leq \varphi _{Y}^{-1}\left(
\dfrac{1}{P_{j}(B)g(P_{j}(B))}\right) .
\end{equation*}
Therefore
\begin{equation*}
\varphi _{Y}\left( \frac{\mu (B_{j+1})}{\mu (2B)}\right) \leq \dfrac{1}{%
P_{j}(B)g(P_{j}(B))}.
\end{equation*}
and so
\begin{equation*}
P_{j+1}(B)\geq \dfrac{P_{j}(B)g(P_{j}(B))}{Ch(j+1)}.
\end{equation*}
Hence,

\begin{align*}
P_{j+1}(B)& \geq \dfrac{P_{j}(B)g\left( P_{j}(B)\right) }{Ch(j+1)} \\
& \geq
\dfrac{P_{1}(B)e^{j}g(\frac{P_{1}(B)}{e}e^{j})}{Ce(j+1)g(j+1)}\text{
(by (\ref{indu})) } \\
& \geq \dfrac{P_{1}(B)e^{j}g(De^{j+1})}{Ce(j+1)g(j+1)}\text{ (by
(\ref
{nodobla}))} \\
& \geq P_{1}(B)e^{j}\text{ by (\ref{epa1}).}
\end{align*}
But now (\ref{indu}) implies that $P_{j}(B)\rightarrow \infty $ as $%
j\rightarrow \infty $, which contradicts the fact that
\begin{equation*}
P_{j}(B)=\dfrac{1}{Ch(j)\varphi _{Y}\left( \frac{\mu (B_{j})}{\mu (2B)}%
\right) }\leq \dfrac{1}{Ch(j)\varphi _{Y}\left( \frac{\mu (\frac{1}{2}B)}{%
\mu (2B)}\right) }\rightarrow 0,\;\;j\rightarrow \infty .
\end{equation*}
Thus, (\ref{provadoble}) holds true and the proof is complete.

It remains to prove (\ref{claim}). By the concavity of $\varphi
_{X}$ we get that $t\leq \varphi _{X}(t)$ if $0<t<1,$ therefore
since $g\left( 1/t\right) $ decreases we have that
\begin{equation*}
g\left( \frac{1}{\varphi _{X}(t)}\right) \leq g\left(
\frac{1}{t}\right) \leq \dfrac{\varphi _{X}(t)}{\varphi _{Y}(t)}.
\end{equation*}
Letting $t=\varphi _{X}^{-1}(s)$ we obtain
\begin{equation*}
g\left( \frac{1}{s}\right) \leq \dfrac{s}{\varphi _{Y}(\varphi
_{X}^{-1}(s))}
\end{equation*}
and so
\begin{equation*}
\varphi _{Y}(\varphi _{X}^{-1}(s))\leq \dfrac{s}{g\left(
\frac{1}{s}\right) }
\end{equation*}
i.e.
\begin{equation*}
\varphi _{Y}(t)\leq \dfrac{\varphi _{X}(t)}{g\left( \frac{1}{\varphi _{X}(t)}%
\right) }.
\end{equation*}
Whence
\begin{equation*}
t=\varphi _{Y}^{-1}(\varphi _{Y}(t))\leq \varphi _{Y}^{-1}\left( \dfrac{%
\varphi _{X}(t)}{g\left( \frac{1}{\varphi _{X}(t)}\right) }\right) ,
\end{equation*}
which is equivalent to
\begin{equation*}
\varphi _{X}^{-1}(s)\leq \varphi _{Y}^{-1}\left( \frac{s}{g\left( \frac{1}{s}%
\right) }\right) ,
\end{equation*}
as we wished to show.

\begin{remark}
The iterative arguments used in the proof are inspired in the method
used in \cite{Ko}.
\end{remark}

\begin{remark}
It is plain that if in Theorem \ref{mainth}, the r.i. space $X$
(resp. $Y)$ is replaced by any r.i. space $\hat{X}$ (resp.
$\hat{Y})$ such that has
equivalent fundamental function, i.e. there is $c\geq 1$ such that $\frac{1}{%
c}\varphi _{X}\leq $ $\varphi _{\hat{X}}\leq c\varphi _{X},$ then
the conclusion
remains true with $X$ replaced by $\hat{X}$ (resp. $Y $ replaced by $\hat{Y}%
).$

Associated with a r.i. space $X$ we get the Lorentz and
Marcinkiewicz space defined by the r.i. norms
\begin{equation*}
\left\| f\right\| _{M(X)}=\sup_{t}\left(
\frac{1}{t}\int_{0}^{t}f_{\mu }^{\ast }(s)ds\right) \varphi
_{X}(t),\text{ \ \ }\left\| f\right\| _{\Lambda (X)}=\int_{0}^{\mu
(\Omega )}f_{\mu }^{\ast }(t)d\varphi _{X}(t).
\end{equation*}
Since
\begin{equation*}
\varphi _{M(X)}(t)=\varphi _{\Lambda (X)}(t)=\varphi _{X}(t),
\end{equation*}
and (see \cite{BS})
\begin{equation*}
\Lambda (X)\subset X\subset M(X),
\end{equation*}
we have that $\left( X,Y\right) -$Poincar\'{e}'s inequalities, in
the previous Theorem can be replaced by the weaker $\left(
M(X),\Lambda (Y\right) )-$Poincar\'{e}'s inequalities.
\end{remark}

\section{Examples\label{exam}}

In this Section we shall give several examples where Theorem
\ref{mainth} can be applied.

\subsection{$L^p$-spaces}

Let $X=L^{p\sigma }(\mu )$ and $Y=L^{p}(\mu )$ where $1\leq p<\infty $ and $%
1<\sigma <\infty .$ The function $\Psi (t)=t^{\frac{1}{p}\left( \frac{1}{%
\sigma }-1\right) }$ is decreasing and considering $g(t)=\Psi (1/t)$
an
elementary computation shows that $\lim_{t\rightarrow \infty }\dfrac{t\Psi (%
\frac{1}{t})}{\Psi (e^{-t})}=0.$ Thus, if $(\Omega ,d,\mu )$ admits a $%
(L^{p\sigma },L^{p})-$Poincar\'{e} inequality, then measure $\mu $
is doubling and we recover the results of \cite{KMR} and \cite{AH}.

\begin{remark}
Let $p,q\in \lbrack 1,\infty ]$. Assume that either $1<p<\infty $
and $1\leq q\leq \infty ,$ or $p=q=1$, or $p=q=\infty $. Then the
functional defined as
\begin{equation*}
\left\| f\right\| _{L^{p,q}(\Omega ,\mu )}=\left\| t^{\frac{1}{p}-\frac{1}{q}%
}f_{\mu }^{\ast }(t)\right\| _{L^{q}(\left( 0,\mu (\Omega )\right)
,ds)}
\end{equation*}
is equivalent to a r.i. function norm. The corresponding r.i. space
is called a \textbf{Lorentz space}.

Assume now that either $1<p<\infty $ and $1\leq q\leq \infty $ and
$\alpha
\in \mathbb{R}$, or$p=1,q=1$ and $\alpha \geq 0$ or $p=q=\infty $ and $%
\alpha \leq 0,$ or $p=\infty ,$ $1\leq q\leq \infty $ and $\alpha
+1/q<0.$ Then also the functional given by
\begin{equation*}
\left\| f\right\| _{L^{p,q,\alpha }(\Omega ,\mu )}=\left\| t^{\frac{1}{p}-%
\frac{1}{q}}f_{\mu }^{\ast }(t)(1+\ln ^{+}\frac{1}{t})^{\alpha
}\right\| _{L^{q}(\left( 0,\mu (\Omega )\right) ,ds)}
\end{equation*}
is equivalent to a r.i. function norm. The r.i. space built upon
this function norm is called a \textbf{Lorentz-Zygmund} space.

Lorentz spaces and Lorentz-Zygmund spaces has the same fundamental
function that Lebesgue spaces, hence the same result holds for these
spaces.
\end{remark}

\subsection{Orlicz spaces}

When working in Orlicz spaces, it is useful to establish conditions
in terms of Young's functions instead of fundamental functions, in
the next result we state the version of Theorem \ref{mainth} in this
sense.

\begin{theorem}
Let $L^{A}\left( \Omega ,\mu \right) $ and $L^{\hat{A}}\left( \Omega
,\mu
\right) $ be two Orlicz spaces on $\Omega $ with Young functions $A$ and $%
\hat{A}$. Let $g:[1,\infty )\rightarrow \lbrack 1,\infty )$ be a
continuous increasing function with $g(1)=1,$ such that
\begin{equation}
A\left( tg(t)\right) \leq \hat{A}\left( t\right) ,\text{ \ \ }t>1.
\label{OrliczYoung}
\end{equation}
that satisfies (\ref{decre}). Then, if the triple $\left( \Omega
,d,\mu \right) $ admits a $\left( L^{\hat{A}},L^{A}\right)-$
Poincar\'{e} inequality, then measure $\mu $ is doubling.
\end{theorem}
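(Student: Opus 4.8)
The plan is to reuse the proof of Theorem \ref{mainth} rather than to invoke it as a black box. Put $X=L^{\hat A}(\Omega,\mu)$ and $Y=L^{A}(\Omega,\mu)$, so that the hypothesis says exactly that $(\Omega,d,\mu)$ admits an $(X,Y)$-Poincar\'e inequality in the sense of Definition \ref{XYPoin}. One might hope to simply verify the condition of Theorem \ref{mainth}, i.e.\ $\varphi_X(1/t)/\varphi_Y(1/t)\ge g(t)$; but for Orlicz spaces this pointwise inequality turns out to be equivalent to the \emph{stronger} requirement $A\!\big(t\,g(\hat A(t))\big)\le \hat A(t)$, which (\ref{OrliczYoung}) does not give (since $g(\hat A(t))\ge g(t)$). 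The key observation is that this stronger condition is not actually needed: inspecting the proof of Theorem \ref{mainth}, the only way $g$ enters, apart from Ermakoff's condition (\ref{decre}), is through inequality (\ref{claim}),
\[
\varphi_{X}^{-1}(t)\le \varphi_{Y}^{-1}\!\Big(\tfrac{t}{g(1/t)}\Big),\qquad 0<t<1 .
\]
Everything from (\ref{claim}) onwards — the construction of $\{f_j\}$, the estimates (\ref{aaa0})--(\ref{bbb}), the quantities $P_j(B)$, the inductive bound $P_{j+1}(B)\ge P_j(B)g(P_j(B))/(Ch(j+1))$ extracted from (\ref{porfin}) and (\ref{claim}), and the final Ermakoff contradiction — uses only (\ref{claim}), (\ref{decre}) and generic properties of r.i.\ spaces (the triangle and H\"older inequalities, and the identity $\|\chi_E\|_{Z(C,\mu_C)}=\varphi_Z(\mu(E)/\mu(C))$ for $E\subset C$), all of which hold for Orlicz spaces. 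So it suffices to deduce (\ref{claim}) from (\ref{OrliczYoung}).

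First I would compute the fundamental function of an Orlicz space: for a set $E$ with $\mu(E)=s$ one has $\int_\Omega A(\chi_E/\lambda)\,d\mu=A(1/\lambda)\,s$, hence $\|\chi_E\|_{L^A}=\inf\{\lambda>0:A(1/\lambda)\le 1/s\}=1/A^{-1}(1/s)$; that is $\varphi_{L^A}(s)=1/A^{-1}(1/s)$, and therefore $\varphi_{L^A}^{-1}(y)=1/A(1/y)$, with the same formulas for $\hat A$. (Here $A^{-1}$ is the appropriate generalized inverse; since every finite-valued Young function is continuous and is equivalent to a strictly increasing one, which changes $\varphi_{L^A}$ only up to equivalence of fundamental functions and hence — by the Remark on equivalent fundamental functions following the proof of Theorem \ref{mainth} — changes nothing, one may assume $A,\hat A$ are strictly increasing and continuous, so all the inverses above are genuine.)

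Substituting these expressions into (\ref{claim}): for $0<t<1$ it reads $1/\hat A(1/t)\le 1/A\!\big(g(1/t)/t\big)$, i.e.\ $A\!\big(\tfrac1t\,g(\tfrac1t)\big)\le \hat A\!\big(\tfrac1t\big)$, and writing $u=1/t>1$ this is precisely (\ref{OrliczYoung}). Thus (\ref{OrliczYoung}) yields (\ref{claim}) — in fact the two are equivalent — and since $g$ also satisfies (\ref{decre}), the remainder of the proof of Theorem \ref{mainth} applies verbatim and gives that $\mu$ is doubling.

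The hard part here is conceptual rather than technical: recognising that (\ref{OrliczYoung}) is the exact Orlicz translation of the inequality (\ref{claim}) that actually drives the proof — not of the more demanding pointwise bound that a naive appeal to Theorem \ref{mainth} would require — which is also why the statement is genuinely a little stronger than that theorem. The only genuine (but routine) care needed is the generalized-inverse bookkeeping in passing from $\varphi_{L^A}(s)=1/A^{-1}(1/s)$ to $\varphi_{L^A}^{-1}(y)=1/A(1/y)$, and checking that the renormed $\varphi_X,\varphi_Y$ are quasi-concave and strictly increasing on $(0,1]$ so that the manipulations of the original argument remain legitimate.
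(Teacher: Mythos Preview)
Your proof is correct and follows essentially the same route as the paper's: both compute $\varphi_{L^A}(s)=1/A^{-1}(1/s)$ (hence $\varphi_{L^A}^{-1}(y)=1/A(1/y)$), observe that (\ref{OrliczYoung}) is exactly (\ref{claim}) rewritten in terms of the Young functions, and then re-enter the proof of Theorem \ref{mainth} at (\ref{claim}). Your additional remark that the pointwise hypothesis $\Psi(1/t)\ge g(t)$ of Theorem \ref{mainth} need not follow from (\ref{OrliczYoung}) --- so a black-box appeal would fail --- is a useful clarification the paper leaves implicit.
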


\begin{proof}
From $\hat{A}\left( t\right) \geq A\left( tg(t)\right) $ we obtain
\begin{equation*}
tg(t)\leq A^{-1}\left( \hat{A}\left( t\right) \right) .
\end{equation*}
Since $\varphi _{L^{A}}(t)=\frac{1}{A^{-1}(1/t)}$ (see \cite{BS}),
we can write the above inequality as
\begin{equation*}
tg(t)\leq \frac{1}{\varphi _{L^{A}}\left( \frac{1}{\hat{A}\left( t\right) }%
\right) }.
\end{equation*}
On the other hand, since $\varphi _{L^{\hat{A}}}^{-1}(\frac{1}{t})=\frac{1}{%
\hat{A}\left( t\right) },$ we get that
\begin{equation*}
tg(t)\leq \frac{1}{\varphi _{L^{A}}\left( \varphi _{L^{\hat{A}}}^{-1}(\frac{1%
}{t})\right) },
\end{equation*}
which implies
\begin{equation*}
\varphi _{L^{\hat{A}}}^{-1}(t)\leq \varphi _{L^{A}}^{-1}\left( \frac{t}{%
g\left( \frac{1}{t}\right) }\right) ,
\end{equation*}
Now with the same argument used in the proof of Theorem \ref{mainth} from (%
\ref{claim}) we finish the proof.
\end{proof}

In the particular case that $(\Omega ,d,\mu )$ admits a $\left(
L^{A},L^{1}\right) -$Poincar\'{e} inequality with $A(t)\geq t(1+\ln
t)^{\alpha }$ for $t>1,$ we recover \cite[Theorem 2.4]{Ko}.

\subsection{General Case}

The upper and lower Zippin indices associated with a r.i. space $X$
are defined by
\begin{equation*}
\overline{\beta }_{X}=\inf\limits_{s>1}\dfrac{\ln M_{X}(s)}{\ln
s}\text{ \ \ and \ \ }\underline{\beta
}_{X}=\sup\limits_{s<1}\dfrac{\ln M_{X}(s)}{\ln s},
\end{equation*}
where
\begin{equation*}
M_{X}(s)=\sup\limits_{t>0}\dfrac{\varphi _{X}(ts)}{\varphi _{X}(t)},\text{ }%
s>0.
\end{equation*}
It is known that (see \cite[p. 272]{Zip})
\begin{equation*}
0\leq \underline{\beta }_{X}\leq \overline{\beta }_{X}\leq 1,
\end{equation*}
and that for any $\varepsilon >0$ there is $\delta =\delta
(\varepsilon )$ such that the following inequalities are satisfied
\begin{equation}
s^{\underline{\beta }_{X}}<M_{X}(s)<s^{\underline{\beta }_{X}-\varepsilon },%
\text{ \ \ }0<s<\delta,  \label{zipinf}
\end{equation}
\begin{equation}
s^{\overline{\beta }_{X}}<M_{X}(s)<s^{\overline{\beta }_{X}-\varepsilon },%
\text{ \ \ }s>1/\delta.  \label{zipsup}
\end{equation}
Let $X,Y$ two r.i. spaces, then
\begin{equation*}
\frac{\varphi _{Y}(t)}{\varphi _{X}(t)}\leq \frac{\varphi
_{Y}(st)}{\varphi
_{X}(st)}\sup\limits_{t>0}\frac{\varphi _{X}(st)}{\varphi _{X}(t)}%
\sup\limits_{t>0}\frac{\varphi _{Y}(t)}{\varphi
_{Y}(st)}=\frac{\varphi _{Y}(st)}{\varphi
_{X}(st)}M_{X}(s)M_{Y}(1/s).
\end{equation*}
Pick $\varepsilon >0,$ letting $s=1/t,$ and combining (\ref{zipinf}) and (%
\ref{zipsup}) we get (for $t$ small enough)
\begin{equation*}
\frac{\varphi _{Y}(t)}{\varphi _{X}(t)}\leq
M_{X}(\frac{1}{t})M_{Y}(t)\leq t^{{\overline{\beta
}_{X}}-\underline{\beta }_{Y}},
\end{equation*}
thus
\begin{equation*}
\Psi (t)=\frac{\varphi _{X}(t)}{\varphi _{Y}(t)}\geq t^{\underline{\beta }%
_{Y}-\bar{\beta}_{X}}.
\end{equation*}
Therefore, if $\underline{\beta }_{Y}<\bar{\beta}_{X}$ and $\left(
\Omega ,d,\mu \right) $ admits a $\left( X,Y\right) -$ Poincar\'{e}
inequality, then $\mu $ is doubling.

In certain sense, Zippin indices (see \cite{Zip}) indicate us the
`position'' of the space, with respecte to $L^{p}$ spaces, notice that $%
\underline{\beta }_{L^{p}}=\overline{\beta }_{L^{p}}=\frac{1}{p}$.

We can also apply our Theorem when Zippin indices of the involved
spaces coincide, but in this case need to ensure that $\varphi _{Y}$
decreases slightly more rapidly that $\varphi _{Y}(t),$ for example,
assuming that there exists $b:(0,1)\rightarrow (0,\infty )$ a slowly
varying decreasing function (i.e.  for each $\varepsilon >0$, the
function $t^{\varepsilon }b(t)$ is equivalent to a increasing
function and $t^{-\varepsilon }b(t)$ is equivalent to a decreasing
function (see \cite{BGT})), so that
\begin{equation*}
\frac{\varphi _{X}(t)}{\varphi _{Y}(t)}\geq b(t).
\end{equation*}
Then, if $b(1/t)$ satisfies Ermakoff's condition (\ref{decre}) and
$\left( \Omega ,d,\mu \right) $ admits a $\left( X,Y\right) -$
Poincar\'{e} inequality, then $\mu $ is doubling.

We finish the paper giving some examples of decreasing slowly
varying functions that satisfies Ermakoff's condition

\begin{example}
Let $n\in \mathbb{N}$. The iterated logarithmic on $(0,1)$ are
defined by
\begin{equation*}
\begin{array}{l}
L_{1}(t)=\ell (t)=1+\ln \frac{1}{t} \\
L_{n+1}(t)=\ell (L_{n}(t)),\text{ \ }n\geq 2.
\end{array}
\end{equation*}

The following functions are decreasing slowly varying functions that
satisfy Ermakoff's condition:

\begin{enumerate}
\item  Let $k<m,$ $\left( k,m\in \mathbb{N}\right) $%
\begin{equation*}
c_{k,m}(t)=\exp \left( \frac{L_{k}(t)}{L_{m}(t)}\right)
\end{equation*}

\item  Let $k\geq 1,$ and $\hat{\alpha}_{k}=\left( \alpha _{1},\alpha
_{2},\cdots ,\alpha _{k}\right) $ where $0<\alpha _{j}<1,$ $\left(
j\leq
k\right) $%
\begin{equation*}
d_{k}(t)=\exp \left( \left( L_{1}(t)\right) ^{\alpha _{1}}\left(
L_{2}(t)\right) ^{\alpha _{2}}\cdots \left( L_{k}(t)\right) ^{\alpha
_{k}}\right) .
\end{equation*}

\item  Let $m\in \mathbb{N}$ and $\alpha >1$%
\begin{equation*}
b_{m,\alpha }(t)=\left( \prod_{j=1}^{m-1}L_{j}(t)\right) \left(
L_{n}(t)\right) ^{\alpha }.
\end{equation*}
(Notice that if $\alpha \leq 1$ Ermakoff's condition is not
fulfilled).
\end{enumerate}
\end{example}

\bigskip

\bigskip

\end{document}